\newtheorem{theorem}{Theorem}[section]
\newtheorem{prop}[theorem]{Proposition}
\newtheorem{conjecture}{Conjecture}
\newtheorem{cor}[theorem]{Corollary}
\newtheorem{question}[theorem]{Question}
\title{Note on the spectra of Steiner distance hypermatrices}
\author{Joshua Cooper and Zhibin Du}
\date{\today}
\begin{document}

\maketitle

\begin{abstract}
    The Steiner distance of a set of vertices in a graph is the fewest number of edges in any connected subgraph containing those vertices.  The order-$k$ Steiner distance hypermatrix of a graph is the $n$-dimensional array indexed by vertices, whose entries are the Steiner distances of their corresponding indices.  In the case of $k=2$, this reduces to the classical distance matrix of a graph.  Graham and Pollak showed in 1971 that the determinant of the distance matrix of a tree only depends on its number $n$ of vertices.  Here, we show that the hyperdeterminant of the Steiner distance hypermatrix of a tree vanishes if and only if (a) $n \geq 3$ and $k$ is odd, (b) $n=1$, or (c) $n=2$ and $k \equiv 1 \pmod{6}$.  Two proofs are presented of the $n=2$ case -- the other situations were handled previously -- and we use the argument further to show that the distance spectral radius for $n=2$ is equal to $2^{k-1}-1$.  Some related open questions are also discussed.
\end{abstract}

\section{Introduction}

Distance matrices are a natural object of study in graph theory.  An influential paper of Graham and Pollak (\cite{GrPo71}) showed, among other things, the surprising result that determinants of distance matrices of trees only depend on the number $n$ of vertices: $(1-n)(-2)^{n-2}$.  This led to a tremendous amount of scholarship concerning these matrices' spectral properties.  Interested readers are directed to \cite{AoHa14} for a thorough history.

Steiner distance, a generalization of pairwise distance to any set of vertices, was introduced by \cite{ChOeTiZo89}; an extensive survey can be found in \cite{Ma17}. Let $V(G)$ be the vertex set of $G$. The Steiner distance $d(S)$ of a set $S \subseteq V(G)$ of vertices in a graph $G$ is the fewest number of edges in any connected subgraph of $G$ containing $S$.  A straightforward generalization of distance matrices is then the order-$k$ ``Steiner distance hypermatrix'' of a graph $G$ on $n$ vertices, the order-$k$ dimension-$n$ array $D_k(G)$ whose $(v_1,\ldots,v_k) \in V(G)^k$ entry is $d(\{v_1,\ldots,v_k\})$.

Since Qi (\cite{Qi05}) defined symmetric hyperdeterminants and provided tools for studying spectra, it is natural to ask for multilinear generalizations of the Graham-Pollak Tree Theorem.  In general, hyperdeterminants are computationally intensive to compute and conceptually difficult to study.  However, \cite{CoTa24a} showed that, for trees $T$ on $n\geq 3$ vertices, $\det(D_k(T))$ is zero when $k$ is odd; and \cite{CoTa24b} showed that it is nonzero for $k$ even.  It is trivial that $\det(D_k(T))$ is $0$ when $n = 1$, so -- as far as the vanishing of this quantity is concerned -- the only remaining case to resolve is $n=2$, which is the main result of this note.

\begin{theorem} \label{thm:mainthm}
The Steiner distance hyperdeterminant $\det(D_k(T))$ of a tree $T$ on $n$ vertices vanishes iff $n=1$, $k$ is odd and $n>2$, or $k \equiv 1\, (\bmod\,6)$ and $n=2$.
\end{theorem}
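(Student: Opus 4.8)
The plan is to reduce the entire $n=2$ claim to a single elementary question about roots of unity. Since the only tree on two vertices is a single edge $K_2$, the Steiner distance of a multiset of its vertices is $0$ when all the indices coincide and $1$ otherwise. Hence $D_k(T)$ is the order-$k$, dimension-$2$ symmetric hypermatrix whose entries are all $1$ except for the two ``diagonal'' entries, which are $0$. I would first record its associated form
\[
f(x_1,x_2) \;=\; D_k(T)\,x^k \;=\; (x_1+x_2)^k - x_1^k - x_2^k,
\]
obtained by subtracting the two diagonal monomials $x_1^k$ and $x_2^k$ from the all-ones form $(x_1+x_2)^k$.

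Next I would invoke Qi's description of the symmetric hyperdeterminant as the resultant of the system $D_k(T)\,x^{k-1}=0$, whose $i$-th equation is $\tfrac1k\,\partial f/\partial x_i=0$. For $n=2$ this is the resultant of the two binary forms
\[
\partial_1 f = k\big[(x_1+x_2)^{k-1}-x_1^{k-1}\big], \qquad \partial_2 f = k\big[(x_1+x_2)^{k-1}-x_2^{k-1}\big],
\]
so $\det(D_k(T))$ vanishes if and only if these forms share a common zero $[x_1:x_2]\in\mathbb{P}^1$. Substituting shows the two coordinate points $[1:0]$ and $[0:1]$ are never common zeros, so at any common zero both coordinates are nonzero; writing $t=x_2/x_1$ and $m=k-1$, the two equations become exactly $t^{\,m}=1$ and $(1+t)^{\,m}=1$. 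Thus the whole problem collapses to: for which $m$ does there exist $t$ with both $t$ and $1+t$ roots of unity of order dividing $m$?

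To finish I would argue on the unit circle. Both $t^m=1$ and $(1+t)^m=1$ force $|t|=|1+t|=1$; writing $t=e^{i\theta}$ and expanding $|1+e^{i\theta}|^2=2+2\cos\theta=1$ gives $\cos\theta=-\tfrac12$, so $t$ is a primitive cube root of unity and $1+t=-t^2$ is a primitive sixth root of unity. Then $t^m=1$ requires $3\mid m$ while $(1+t)^m=1$ requires $6\mid m$, the latter implying the former; hence a common zero exists precisely when $6\mid m$, i.e.\ $k\equiv 1\pmod 6$. Combining this with the $n\ge 3$ results of \cite{CoTa24a,CoTa24b} and the trivial case $n=1$ yields Theorem~\ref{thm:mainthm}. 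For an explicit, independent second proof one can simply exhibit the null vector $x=(1,\omega)$ with $\omega=e^{2\pi i/3}$, which solves $D_k(T)\,x^{k-1}=0$ whenever $6\mid k-1$, and, in the other direction, evaluate the resultant directly (up to a nonzero factor) as $\prod_{\zeta^{m}=1}\big(1-(\zeta-1)^{m}\big)$ to see that no factor vanishes otherwise.

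The main obstacle I anticipate is not the root-of-unity arithmetic but the bookkeeping around the resultant itself: the forms $\partial_1 f$ and $\partial_2 f$ have vanishing leading coefficients (the $x_1^{k-1}$ and $x_2^{k-1}$ terms cancel), so I must treat them as forms of degree $k-1$ and correctly account for the zeros ``at infinity'' $[1:0]$ and $[0:1]$ when asserting that vanishing of the resultant is equivalent to the existence of a projective common zero. Once that equivalence is pinned down, together with the fact that Qi's hyperdeterminant agrees with this resultant up to a nonzero constant so that its vanishing is faithfully detected, the remaining steps are routine.
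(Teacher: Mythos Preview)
Your argument is correct and takes a genuinely different route from the paper. The paper computes $\det(D_k(K_2))$ explicitly: it applies Qi's Sylvester formula to obtain a $2(k-1)\times 2(k-1)$ matrix, block-triangularizes it by a similarity to read off the eigenvalues $-1$ (with multiplicity $k-1$) and $(1+e^{2\pi i j/(k-1)})^{k-1}-1$ for $0\le j\le k-2$, recognizes the resulting product as $(-1)^{k-1}W_{k-1}$ (Wendt's determinant), and then \emph{cites} Lehmer's theorem that $W_m=0$ iff $6\mid m$. A second proof in the paper reaches the same eigenvalue product via the known characteristic polynomial of the all-ones hypermatrix $J_2^k$ and the shift $D=J-I$. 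By contrast, you never compute the determinant: you use directly that the hyperdeterminant is the resultant of the gradient system, so its vanishing is equivalent to a common projective zero of $(x_1+x_2)^{k-1}-x_1^{k-1}$ and $(x_1+x_2)^{k-1}-x_2^{k-1}$, and then dispose of this with the elementary observation that $|t|=|1+t|=1$ forces $t$ to be a primitive cube root of unity and $1+t$ a primitive sixth root. Your approach is shorter and self-contained---it effectively reproves the relevant direction of Lehmer's result rather than invoking it---while the paper's approach yields more: the exact value of $\det(D_k(K_2))$, its identification with Wendt's determinant, and the full eigenvalue list, from which the spectral radius $2^{k-1}-1$ follows. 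The caveat you flag about degenerate leading coefficients is handled exactly as you indicate: treat the two partials as homogeneous binary forms of degree $k-1$, for which vanishing of the resultant is equivalent to a common zero in $\mathbb{P}^1$, and you have already checked that $[1:0]$ and $[0:1]$ are not common zeros.
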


We present two short proofs: one which is an application of Qi's version of Sylvester's formula, and another which uses a description of the characteristic polynomial of the all-ones hypermatrix.  An interesting aspect of our formula for the hyperdeterminant is that, up to sign, it equals ``Wendt's determinant'' (OEIS A048954) with well-known connections to Fermat's Last Theorem.  We also use our computations to describe exactly the spectral radius of $D_k(T)$, and present a few open questions in this area.

\section{Proofs}

Let $D := D_k(K_2)$ denote the order-$k$ $2 \times \cdots \times 2$ Steiner distance hypermatrix of the single-edge graph $K_2$.  An order-$k$ hypermatrix $M$ with index set $S$ is ``symmetric'' if, for each permutation $\sigma$ of $[k]$, the $(i_1,\ldots,i_k) \in S^k$ entry of $M$ equals the $(i_{\sigma(1)},\ldots, i_{\sigma(k)})$ entry.  Of course, if $S = \{0,1\}$, then the entries of a symmetric hypermatrix depend only on the number of their indices which equal $0$ or $1$.  In this special case, Qi gives the following version of Sylvester's formula in \cite{Qi05} for the hyperdeterminant:

\begin{prop}
    For an order-$k$, dimension-$2$ symmetric hypermatrix $M$ with entry $a_t$ at indices $t$ of whose coordinates are $1$, $0 \leq t \leq k$, $\det(M)$ is given by
    $$
    \left |
\begin{array}{ccccccccc}
a_0 & \binom{k-1}{1} a_1 & \binom{k-1}{2} a_2 & \cdots & a_{k-1} & 0 & \cdots & 0 & 0 \\
0 & a_0 & \binom{k-1}{1} a_1 & \cdots &\binom{k-1}{k-2} a_{k-2} & a_{k-1}  & \cdots & 0 & 0 \\
\vdots &&&& \vdots &&&& \vdots \\
0 & 0 & 0 & \cdots & \binom{k-1}{1} a_1 & \binom{k-1}{2} a_2 & \cdots & \binom{k-1}{k-2} a_{k-2} & a_{k-1} \\
a_1 & \binom{k-1}{1} a_2 & \binom{k-1}{2} a_3 & \cdots & a_{k} & 0 & \cdots & 0 & 0 \\
0 & a_1 & \binom{k-1}{1} a_2 & \cdots &\binom{k-1}{k-2} a_{k-1} & a_{k}  & \cdots & 0 & 0 \\
\vdots &&&& \vdots &&&& \vdots \\
0 & 0 & 0 & \cdots & \binom{k-1}{1} a_2 & \binom{k-1}{2} a_3 & \cdots & \binom{k-1}{k-2} a_{k-1} & a_{k}
\end{array}
\right |
    $$
\end{prop}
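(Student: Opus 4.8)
The plan is to unwind Qi's definition of the symmetric hyperdeterminant and reduce the claim to the classical Sylvester resultant of two binary forms. Recall from \cite{Qi05} that for a symmetric order-$k$, dimension-$n$ hypermatrix $M$ the hyperdeterminant $\det(M)$ is the resultant of the $n$ homogeneous polynomials
$$
f_i(x) = (Mx^{k-1})_i = \sum_{i_2,\ldots,i_k} m_{i\, i_2 \cdots i_k}\, x_{i_2}\cdots x_{i_k},
$$
each of degree $k-1$ in the variables $x=(x_0,\ldots,x_{n-1})$. For $n=2$ this is just the resultant of two binary forms of degree $k-1$, so the whole statement becomes a matter of (i) writing those two forms down explicitly and (ii) quoting the standard Sylvester determinant for the resultant of two binary forms of equal degree.

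First I would compute $f_0$ and $f_1$ directly from the symmetry hypothesis. Since the entry $m_{i_1\cdots i_k}$ depends only on the number $t$ of indices equal to $1$, fixing the first index and summing over the remaining $k-1$ free indices groups the monomials by the number $s$ of ones among those free indices. Fixing the first index to $0$ gives $t=s$ ones overall, and there are $\binom{k-1}{s}$ index strings producing the monomial $x_0^{k-1-s}x_1^s$, so
$$
f_0(x_0,x_1) = \sum_{s=0}^{k-1} \binom{k-1}{s} a_s\, x_0^{k-1-s} x_1^s .
$$
Fixing the first index to $1$ shifts the count by one, giving
$$
f_1(x_0,x_1) = \sum_{s=0}^{k-1} \binom{k-1}{s} a_{s+1}\, x_0^{k-1-s} x_1^s .
$$
The two coefficient vectors $\bigl(\binom{k-1}{s}a_s\bigr)_{s=0}^{k-1}$ and $\bigl(\binom{k-1}{s}a_{s+1}\bigr)_{s=0}^{k-1}$ are exactly the entries filling, respectively, the top $k-1$ rows and the bottom $k-1$ rows of the displayed matrix.

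To finish, I would invoke the classical Sylvester formula: the resultant of two binary forms of degree $d=k-1$ is the $2d\times 2d$ determinant whose first $d$ rows are successive right-shifts of the coefficient vector of the first form and whose last $d$ rows are successive right-shifts of that of the second, padded by zeros. Substituting the forms $f_0,f_1$ computed above reproduces precisely the $2(k-1)\times 2(k-1)$ array in the proposition, so $\det(M)$ equals that determinant. The one point genuinely demanding care -- and the main (if modest) obstacle -- is matching conventions: one must confirm that Qi's normalization of the symmetric hyperdeterminant agrees on the nose, rather than up to a sign or an overall scalar power, with the Sylvester resultant of the gradient system $Mx^{k-1}=0$ in the binary case, and note that each $f_i$ is genuinely homogeneous of degree $k-1$, which is what licenses using a square $2(k-1)\times 2(k-1)$ Sylvester matrix. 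This is bookkeeping rather than substance, so the proof is essentially the explicit coefficient computation above followed by a citation of the classical resultant formula.
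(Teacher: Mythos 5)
Your proposal is correct, but there is nothing in the paper to compare it against: the paper states this proposition as a quoted result of Qi \cite{Qi05} and gives no proof of it at all, reserving its arguments for the consequences (the evaluation of $\det(D_k(K_2))$ via a circulant block decomposition). Your derivation is the canonical one and it does go through: Qi defines the symmetric hyperdeterminant as the resultant of the gradient system $Mx^{k-1}=0$, your coefficient computation is exactly right (grouping the $k-1$ free indices by the number $s$ of ones gives $f_0=\sum_{s=0}^{k-1}\binom{k-1}{s}a_s\,x_0^{k-1-s}x_1^s$ and $f_1=\sum_{s=0}^{k-1}\binom{k-1}{s}a_{s+1}\,x_0^{k-1-s}x_1^s$), and the displayed $2(k-1)\times 2(k-1)$ array is precisely the Sylvester matrix of these two degree-$(k-1)$ binary forms. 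The normalization caveat you flag resolves in your favor: for $n=2$ the multivariate (Macaulay) resultant coincides exactly with the Sylvester determinant, both being normalized so that $\mathrm{Res}(x_0^{k-1},x_1^{k-1})=1$, with no extraneous sign or scalar power, so quoting the classical formula closes the argument.
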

For the hypermatrix $D$, we have $a_0 = 0$, $a_k = 0$, $a_{t} = 1$ for all $0 < t < k$.  Therefore,
$$
\det(D) = \left |
\begin{array}{cccccccccc}
0 & \binom{k-1}{1} & \binom{k-1}{2} & \cdots & \binom{k-1}{k-2} & 1 & 0 & \cdots & 0 & 0 \\
0 & 0 & \binom{k-1}{1} & \cdots & \binom{k-1}{k-3} &\binom{k-1}{k-2} & 1 & \cdots & 0 & 0 \\
&&&& \vdots &&&&& \\
0 & 0 & 0 & \cdots & \binom{k-1}{1} & \binom{k-1}{2} & \cdots & \binom{k-1}{k-2} & 1 & 0 \\
0 & 0 & 0 & \cdots & 0 & \binom{k-1}{1} & \cdots & \binom{k-1}{k-3} & \binom{k-1}{k-2} & 1 \\
1 & \binom{k-1}{1} & \binom{k-1}{2} & \cdots & \binom{k-1}{k-2} & 0 & 0 & \cdots & 0 & 0 \\
0 & 1 & \binom{k-1}{1} & \cdots & \binom{k-1}{k-3} &\binom{k-1}{k-2} & 0 & \cdots & 0 & 0 \\
&&&& \vdots &&&&& \\
0 & 0 & 0 & \cdots & \binom{k-1}{1} & \binom{k-1}{2} & \cdots & \binom{k-1}{k-2} & 0 & 0 \\
0 & 0 & 0 & \cdots & 1 & \binom{k-1}{1} & \cdots & \binom{k-1}{k-3} & \binom{k-1}{k-2} & 0
\end{array}
\right | .
$$

Let
$$
A = \left [
\begin{array}{ccccc}
0 & \binom{k-1}{1} & \binom{k-1}{2} & \cdots & \binom{k-1}{k-2}  \\
0 & 0 & \binom{k-1}{1} & \cdots & \binom{k-1}{k-3}  \\
\vdots & \vdots & \vdots & \ddots & \vdots  \\
0 & 0 & 0 & \cdots & \binom{k-1}{1}
\end{array}
\right ]
$$
and
$$
B = \left [
\begin{array}{ccccc}
0 & 0 & \cdots & 0 & 0  \\
\binom{k-1}{k-2} & 0 & \cdots & 0 & 0 \\
\vdots & \vdots & \vdots & \ddots & \vdots  \\
\binom{k-1}{2} & \cdots & \binom{k-1}{k-2} & 0 & 0 \\
\binom{k-1}{1} & \cdots & \binom{k-1}{k-3} & \binom{k-1}{k-2} & 0
\end{array}
\right ].
$$
As usual, we use $I_k$ and $O_k$ to represent an identity matrix and a zero matrix of order $k$, respectively.

It is easy to see that $A + B + I_{k-1}$ is a circulant matrix, generated by the row $(\binom{k-1}{0},\binom{k-1}{1},\ldots,\binom{k-1}{k-2})$, thus the eigenvalues of $A + B + I_{k-1}$ are 
$$
\sum_{r = 0}^{k-2} \binom{k - 1}{r} \left( e^{\frac{2 \pi i j }{k-1}} \right)^r = \left ( 1 + e^{\frac{2 \pi i j}{k-1}} \right )^{k-1} - 1,
$$
where $j =0, \dots, k - 2$.  Next, note that
\begin{align*}
D = & \left [
\begin{array}{cc}
A & B + I_{k-1} \\
A + I_{k-1} & B
\end{array}
\right ] .
\end{align*}
Further, from
\begin{align*}
&\left [
\begin{array}{cc}
I_{k-1} & O_{k-1} \\
I_{k-1} & I_{k-1}
\end{array}
\right ]^{-1}
\left [
\begin{array}{cc}
A & B + I_{k-1} \\
A + I_{k - 1} & B
\end{array}
\right ]
\left [
\begin{array}{cc}
I_{k-1} & O_{k-1} \\
I_{k-1} & I_{k-1}
\end{array}
\right ]
\\
=&
\left [
\begin{array}{cc}
A + B + I_{k-1} & B + I_{k - 1}\\
O_{k-1} & - I_{k - 1}
\end{array}
\right ] ,
\end{align*}
we may conclude that the eigenvalues of $D$ are the eigenvalues of $A + B + I_{k-1}$, and $-1$ of multiplicity $k - 1$. Thus,
$$
\det (D) = (-1)^{k-1} \prod_{j=0}^{k-2} \left (\left ( 1 + e^{\frac{2 \pi i j}{k-1}} \right )^{k-1} - 1 \right ) .
$$
It is worth mentioning that
$$
(-1)^{k-1} \det (D) = \prod_{j=0}^{k-2} \left (\left ( 1 + e^{\frac{2 \pi i j}{k-1}} \right )^{k-1} - 1 \right )
$$
is Wendt's determinant $W_{k-1}$ (\cite[A048954]{oeis}); see \cite{He97} for example.  Lehmer showed (\cite{Le35}) that $W_m$ is $0$ iff $6 | m$, from which Theorem \ref{thm:mainthm} follows.

Now, we reprove this identity using characteristic polynomial $\det(\lambda I - D)$.  The following theorem appears in \cite{CoDu15}.  Denote by $J_n^k$ the all-ones, dimension-$n$, order-$k$ hypermatrix.

\begin{theorem} \label{thm:charpolyJ} Let \( \phi_n(\lambda) \) denote the characteristic polynomial of \( J_n^k \), \( n \geq 2 \).  Then
\[
\phi_n(\lambda) = \lambda^{(n-1) (k-1)^{n-1}} \prod_{\substack{r_1,\ldots,r_{k-1} \in \mathbb{N} \\ r_1 + \cdots + r_{k-1} = n}} \left ( \lambda - \left ( \sum_{j=1}^{k-1} r_j e^{\frac{2 \pi i j}{k-1}} \right )^{k-1} \right )^{\frac{\binom{n}{r_1,\ldots,r_{k-1}}}{k-1}}.
\]
\end{theorem}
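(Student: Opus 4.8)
The plan is to compute $\phi_n(\lambda)$ directly from its definition as the resultant of the eigenvalue system and then to factor that resultant with the Poisson product formula. Writing $s = x_1 + \cdots + x_n$, the rank-one identity $J_n^k = \mathbf{1}^{\otimes k}$ gives $(J_n^k x^{k-1})_i = s^{k-1}$ for every $i$, so the characteristic polynomial is the resultant $\phi_n(\lambda) = \mathrm{Res}(F_1,\ldots,F_n)$ of the $n$ forms $F_i = s^{k-1} - \lambda x_i^{k-1}$, each homogeneous of degree $k-1$ in $x_1,\ldots,x_n$, with $\lambda$ treated as a parameter. The first step is to replace this system by a more tractable one: set $G_1 = F_1$ and $G_i = F_i - F_1 = -\lambda(x_i^{k-1} - x_1^{k-1})$ for $i \geq 2$. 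Since the matrix expressing $(G_i)$ in terms of $(F_i)$ is unipotent (determinant $1$) and all forms share the degree $k-1$, the transformation law for resultants gives $\phi_n(\lambda) = \mathrm{Res}(G_1,\ldots,G_n)$ exactly.

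Next I would apply the Poisson product formula with $G_1$ distinguished,
\[
\mathrm{Res}(G_1,\ldots,G_n) = \mathrm{Res}(\tilde G_2,\ldots,\tilde G_n)^{\,k-1}\,\prod_{p} G_1(p),
\]
where the tilde denotes restriction to $\{x_n = 0\}$ and $p$ ranges over the common zeros of $G_2,\ldots,G_n$ in the chart $\{x_n = 1\}$ (the identity holding for generic $\lambda$ and extending by polynomial continuity). One must first verify the hypotheses: $G_2 = \cdots = G_n = 0$ forces $x_i^{k-1} = x_1^{k-1}$ for all $i$, which on $\{x_n = 0\}$ collapses to the origin, so there are no zeros at infinity; Bezout then yields exactly $(k-1)^{n-1}$ reduced affine zeros, namely the points whose coordinates are $(k-1)$-th roots of unity, each of multiplicity one. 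The infinity factor is computed by pulling a scalar $\pm\lambda$ out of each of the $n-1$ restricted forms, scaling the resultant of $n-1$ forms of degree $k-1$ by $(\pm\lambda)^{(n-1)(k-1)^{n-2}}$; raised to the power $k-1$ this gives $\mathrm{Res}(\tilde G_2,\ldots,\tilde G_n)^{k-1} = \pm\lambda^{(n-1)(k-1)^{n-1}}$ times a nonzero constant, accounting exactly for the prefactor $\lambda^{(n-1)(k-1)^{n-1}}$, i.e.\ the multiplicity of the eigenvalue $0$.

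For the remaining product, each affine zero is $p_{\mathbf c} = (\omega^{c_1},\ldots,\omega^{c_{n-1}},1)$ with $\omega = e^{2\pi i/(k-1)}$ and $\mathbf c \in (\mathbb{Z}/(k-1)\mathbb{Z})^{n-1}$, and since $x_1^{k-1} = 1$ there, $G_1(p_{\mathbf c}) = \big(\sum_{i=1}^{n}\omega^{c_i}\big)^{k-1} - \lambda$ (with $\omega^{c_n} = 1$). Grouping the $p_{\mathbf c}$ by the composition $(r_1,\ldots,r_{k-1})$ recording how many coordinates equal each $\omega^{j}$ turns the product into $\prod_{\mathbf r}(\lambda_{\mathbf r} - \lambda)^{m_{\mathbf r}}$, where $\lambda_{\mathbf r} = \big(\sum_{j=1}^{k-1} r_j e^{2\pi i j/(k-1)}\big)^{k-1}$ is precisely the eigenvalue in the statement. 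Matching the monic leading coefficient of the characteristic polynomial then fixes the overall sign and constant, and the remaining task is to identify the exponents $m_{\mathbf r}$ with the claimed $\frac{1}{k-1}\binom{n}{r_1,\ldots,r_{k-1}}$.

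The \emph{main obstacle} is exactly this last multiplicity bookkeeping. Fixing the chart $\{x_n = 1\}$ breaks the cyclic symmetry $r_j \mapsto r_{j-1}$ (which leaves $\lambda_{\mathbf r}$ invariant because $\omega^{k-1} = 1$), so the raw exponent attached to a single composition is the \emph{asymmetric} multinomial $\binom{n-1}{r_1,\ldots,r_{k-2},\,r_{k-1}-1}$, not the symmetric expression in the statement. I expect to resolve this by summing over each cyclic orbit of compositions giving a common value $\lambda_{\mathbf r}$ and invoking the identity $\sum_{j:\,r_j\ge 1}\binom{n-1}{\ldots,r_j-1,\ldots} = \binom{n}{r_1,\ldots,r_{k-1}}$, which forces the total exponent of each distinct factor $(\lambda - \lambda_{\mathbf r})$ to agree on both sides; the case of orbits with nontrivial cyclic stabilizer, where the factor $\frac{1}{k-1}$ and the orbit size must conspire to an integer, needs separate care. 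Showing that this accounting is exact — not merely correct on total degrees, which follows from $\sum_{\mathbf r}\frac{1}{k-1}\binom{n}{r_1,\ldots,r_{k-1}} = (k-1)^{n-1}$ — is the crux of the argument.
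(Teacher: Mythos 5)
The paper offers no internal proof of Theorem \ref{thm:charpolyJ} to compare against: it is imported wholesale from \cite{CoDu15}, and the method of that cited source is precisely the one you chose --- Qi's resultant definition of the characteristic polynomial combined with the Poisson product formula (the formula even appears in that reference's title). So your proposal is best read as a reconstruction of the quoted proof rather than an alternative to anything in this paper. On its merits, the skeleton is sound: the unipotent reduction $G_i = F_i - F_1$ preserves the resultant because all forms share degree $k-1$; the restricted system on $\{x_n = 0\}$ has only the trivial common zero, so the Poisson formula applies for $\lambda \neq 0$ and extends to a polynomial identity; Bezout forces the $(k-1)^{n-1}$ affine zeros to be reduced; multihomogeneity of the resultant in each argument gives the factor $\lambda^{(n-1)(k-1)^{n-1}}$ up to a nonzero constant; and monicity of Qi's characteristic polynomial pins that constant (and all accumulated signs) to $1$.

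The step you flag as the crux --- and leave unexecuted --- does close, and in a way that handles nontrivial stabilizers with no case analysis. Write the raw exponent as $\binom{n-1}{s_1,\ldots,s_{k-2},s_{k-1}-1} = \frac{s_{k-1}}{n}\binom{n}{s_1,\ldots,s_{k-1}}$, with the convention that it vanishes when $s_{k-1} = 0$ (consistent with the absence of such zeros in the chart $\{x_n=1\}$). Let $\sigma$ denote the cyclic shift of compositions, fix a representative $\mathbf{r}$, and let $O$ be its orbit, of size $(k-1)/d$ where $d$ is the order of the stabilizer. The multinomial $\binom{n}{\mathbf{s}}$ is constant on $O$, and as $t$ runs over $0,\ldots,k-2$ the map $t \mapsto \sigma^t(\mathbf{r})$ covers $O$ exactly $d$ times while the last coordinate $(\sigma^t\mathbf{r})_{k-1}$ takes each value $r_1,\ldots,r_{k-1}$ exactly once; since these sum to $n$, we get $\sum_{\mathbf{s} \in O} s_{k-1} = n/d$, so the total exponent contributed by $O$ to your product is $\frac{1}{d}\binom{n}{\mathbf{r}}$. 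The statement's total over the same orbit is $|O|\cdot\frac{1}{k-1}\binom{n}{\mathbf{r}} = \frac{1}{d}\binom{n}{\mathbf{r}}$ as well, so the two products agree orbit by orbit (your quoted identity $\sum_{j:\,r_j \ge 1}\binom{n-1}{\ldots,r_j-1,\ldots} = \binom{n}{\mathbf{r}}$ is exactly the case $d=1$); coincidences of eigenvalues between distinct orbits are harmless, since both sides are then identical formal products of linear factors. With this paragraph supplied, your argument is a complete and correct proof.
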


If we take the $n=2$ case of this expression, the result is
\begin{align*}
\phi_n(\lambda) &= \lambda^{k-1} \prod_{\substack{r_1,\ldots,r_{k-1} \in \mathbb{N} \\ r_1 + \cdots + r_{k-1} = 2}} \left ( \lambda - \left ( \sum_{j=1}^{k-1} r_j e^{\frac{2 \pi i j}{k-1}} \right )^{k-1} \right )^{\frac{\binom{2}{r_1,\ldots,r_{k-1}}}{k-1}} \\
&= \lambda^{k-1}
\prod_{j_1, j_2 = 1}^{k-1} \left ( \lambda - \left ( e^{\frac{2 \pi i j_1}{k-1}} + e^{\frac{2 \pi i j_2}{k-1}} \right )^{k-1} \right )^{\frac{1}{k-1}}.
\end{align*}
Note that $D = J_2^k - I$, so the characteristic polynomial $\phi_D(\lambda)$ of $D$ is given by
\begin{align*}
\phi_D(\lambda) = \phi_n(\lambda + 1) &= (\lambda+1)^{k-1}
\prod_{j_1, j_2 = 1}^{k-1} \left ( \lambda - \left ( e^{\frac{2 \pi i j_1}{k-1}} + e^{\frac{2 \pi i j_2}{k-1}} \right )^{k-1} + 1 \right )^{\frac{1}{k-1}} \nonumber \\
&= (\lambda+1)^{k-1}
\prod_{j=0}^{k-2} \left ( \lambda - \left ( 1 + e^{\frac{2 \pi i j}{k-1}} \right )^{k-1} + 1 \right ). 
\end{align*}
The constant term of this expression is
\begin{align*}
\det(D) &= \prod_{j=0}^{k-2} \left (- \left ( 1 + e^{\frac{2 \pi i j}{k-1}} \right )^{k-1} + 1 \right ) \\
&= (-1)^{k-1} \prod_{j=0}^{k-2} \left (\left ( 1 + e^{\frac{2 \pi i j}{k-1}} \right )^{k-1} - 1 \right ).
\end{align*}

We may now take advantage of the above analysis to compute the spectral radius of $D$.

\begin{cor}
    The Steiner distance hypermatrix of order $k$ with $n=2$ has spectral radius $2^{k-1}-1$.
\end{cor}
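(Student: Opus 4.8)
The plan is to read the spectrum of $D$ directly off the eigenvalue computation above and then to locate the eigenvalue of largest modulus. From the block triangularization, the spectrum of $D$ consists of the $k-1$ numbers
$$
\lambda_j := \left( 1 + e^{\frac{2 \pi i j}{k-1}} \right)^{k-1} - 1, \qquad j = 0, \ldots, k-2,
$$
together with $-1$ of multiplicity $k-1$. The first step is to observe that each $\lambda_j$ is real: writing $1 + e^{\frac{2\pi i j}{k-1}} = 2\cos\!\left(\tfrac{\pi j}{k-1}\right) e^{\frac{\pi i j}{k-1}}$ and raising to the $(k-1)$st power gives
$$
\left( 1 + e^{\frac{2\pi i j}{k-1}} \right)^{k-1} = (-1)^j \left( 2\cos\tfrac{\pi j}{k-1} \right)^{k-1},
$$
so that $\lambda_j = (-1)^j \left( 2\cos\tfrac{\pi j}{k-1} \right)^{k-1} - 1 \in \mathbb{R}$. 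In particular $\lambda_0 = 2^{k-1} - 1$, and it remains to show that no eigenvalue exceeds $2^{k-1}-1$ in absolute value.

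The upper bound is immediate: for $j \neq 0$ we have $\left| 1 + e^{\frac{2\pi i j}{k-1}} \right|^{k-1} = \left( 2\left|\cos\tfrac{\pi j}{k-1}\right| \right)^{k-1} < 2^{k-1}$, whence $\lambda_j < 2^{k-1} - 1$. For the lower bound it suffices to prove $\lambda_j \geq -(2^{k-1}-1)$, equivalently $(-1)^j \left( 2\cos\tfrac{\pi j}{k-1} \right)^{k-1} \geq 2 - 2^{k-1}$. This holds trivially unless the left-hand side is negative, in which case its absolute value is $\left( 2\left|\cos\tfrac{\pi j}{k-1}\right| \right)^{k-1}$; since $\left|\cos\tfrac{\pi j}{k-1}\right| \leq \cos\tfrac{\pi}{k-1}$ for $1 \leq j \leq k-2$, the whole argument reduces to the single inequality
$$
\left( 2\cos\tfrac{\pi}{k-1} \right)^{k-1} \leq 2^{k-1} - 2.
$$
Because the eigenvalue $-1$ has modulus $1 \leq 2^{k-1}-1$, establishing this inequality shows the spectral radius equals $\lambda_0 = 2^{k-1}-1$.

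The main work is this displayed inequality, which I would prove for $k-1 \geq 3$; the cases $k - 1 \in \{1,2\}$ are immediate, since then either there is no index $j \neq 0$ or else $\cos\tfrac{\pi}{2}=0$. Writing $2\cos\tfrac{\pi}{k-1} = 2 - 4\sin^2\tfrac{\pi}{2(k-1)}$ and combining $(1-t)^{k-1} \leq e^{-(k-1)t}$ with Jordan's inequality $\sin\tfrac{\pi}{2(k-1)} \geq \tfrac{1}{k-1}$ gives
$$
\left( 2\cos\tfrac{\pi}{k-1} \right)^{k-1} = 2^{k-1}\left( 1 - 2\sin^2\tfrac{\pi}{2(k-1)} \right)^{k-1} \leq 2^{k-1} e^{-2/(k-1)}.
$$
The elementary estimate $e^{-t} \leq 1 - t + \tfrac{t^2}{2}$ then reduces $2^{k-1} e^{-2/(k-1)} \leq 2^{k-1} - 2$ to $2^{k-1}(k-2) \geq (k-1)^2$, which holds for all $k \geq 4$. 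I expect the only real difficulty to be keeping this chain of estimates tight enough near the boundary: the step $(1-t)^{k-1} \leq e^{-(k-1)t}$ gives away a little, so one must confirm that Jordan's inequality leaves enough slack, and the smallest cases $k \in \{2,3,4\}$ are cleanest to verify by hand.
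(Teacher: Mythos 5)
Your proof is correct, and it takes a genuinely more careful route than the paper's. The paper's own proof is two sentences: it recalls the same spectrum ($-1$ with multiplicity $k-1$, plus $\lambda_j = \left(1+e^{2\pi i j/(k-1)}\right)^{k-1}-1$ for $0 \le j \le k-2$) and then argues that since the point of the unit circle farthest from $-1$ is $1$, the maximum eigenvalue occurs at $j=0$. That identifies the largest eigenvalue, but the spectral radius is the largest \emph{modulus}, and the paper never rules out a strongly negative eigenvalue winning: indeed $\lambda_{k-2} = -\left(2\cos\frac{\pi}{k-1}\right)^{k-1}-1$ for every $k$, so one genuinely needs your inequality $\left(2\cos\frac{\pi}{k-1}\right)^{k-1} \le 2^{k-1}-2$ (or some substitute, e.g.\ the Perron--Frobenius theorem for nonnegative hypermatrices, which the paper cites only later, in its conclusion). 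Your reduction is sound: the eigenvalues are real with $\lambda_j = (-1)^j\left(2\cos\frac{\pi j}{k-1}\right)^{k-1}-1$; the maximum of $\left|\cos\frac{\pi j}{k-1}\right|$ over $1\le j\le k-2$ is $\cos\frac{\pi}{k-1}$ (attained at both ends); and your chain of estimates $(1-t)^{k-1}\le e^{-(k-1)t}$, $\sin\frac{\pi}{2(k-1)}\ge\frac{1}{k-1}$, $e^{-t}\le 1-t+\frac{t^2}{2}$ does close, since the terminal inequality $2^{k-1}(k-2)\ge(k-1)^2$ holds for all $k\ge 3$ (with equality at $k=3$), so there is slack throughout the range $k-1\ge 3$ where you invoke it, and your separate treatment of $k-1\in\{1,2\}$ covers the rest. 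In short, what your argument buys is a complete, self-contained verification that the negative part of the spectrum stays inside the disc of radius $2^{k-1}-1$ --- precisely the point the paper's one-line justification glosses over; what the paper's approach buys is brevity, at the cost of (as written) conflating ``largest eigenvalue'' with ``spectral radius.''
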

\begin{proof}
    Recall that the eigenvalues of $D$ are $-1$ with multiplicity $k-1$, and $[1+\exp(2 \pi i j/(k-1))]^{k-1}-1$ once for each $0 \leq j \leq k-2$. Since the point on the unit circle farthest from $z=-1$ is $1$, the maximum eigenvalue is achieved by taking $j=0$, i.e., $2^{k-1}-1$.
\end{proof}

\section{Conclusion}

We present a few open questions that arose in the present context.  First, we conjecture that the Graham-Pollak Tree Theorem has a full generalization to Steiner distance:

\begin{conjecture}
    The quantity $\det(D_k(T))$ is a function only of $n$ and $k$ for trees $T$ on $n$ vertices.
\end{conjecture}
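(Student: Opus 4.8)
The plan is to realize the tree-dependence of \(D_k(T)\) as a unimodular linear change of variables, under which the hyperdeterminant is invariant, so that \(\det(D_k(T))\) reduces to the hyperdeterminant of a single tree-independent tensor. First I would pass from the hypermatrix to its associated degree-\(k\) form. Writing \(s=\sum_{v}x_v\), and \(p_e=\sum_{v\in X_e}x_v\) for the vertex-sum over one side \(X_e\) of the cut obtained by deleting an edge \(e\) (with \(Y_e=V\setminus X_e\) the other side), the defining fact about Steiner distance in trees — that an edge \(e\) lies in the Steiner tree of a (multi)set \(\{v_1,\dots,v_k\}\) iff the set meets both \(X_e\) and \(Y_e\) — gives the edge-cut expansion \(d(\{v_1,\dots,v_k\})=\sum_e\bigl(1-[\text{all }v_i\in X_e]-[\text{all }v_i\in Y_e]\bigr)\). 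Summing this against \(\prod_i x_{v_i}\) over all of \(V^k\) collapses each indicator into a \(k\)-th power, yielding the clean formula
\[
F_{D_k(T)}(x)\;=\;\sum_{e\in E(T)}\bigl(s^k-p_e^{\,k}-(s-p_e)^k\bigr).
\]

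Next I would introduce subtree-sum coordinates. Rooting \(T\) at a vertex \(r\) and setting \(y_v=\sum_{w\in\mathrm{subtree}(v)}x_w\) for every vertex \(v\) (so that \(y_r=s\)), the edges of \(T\) biject with the non-root vertices via their lower endpoints, and \(p_{e_v}=y_v\). Under this substitution the form becomes
\[
G(y)\;=\;\sum_{v\neq r}\bigl(y_r^{\,k}-y_v^{\,k}-(y_r-y_v)^k\bigr),
\]
which no longer refers to the tree at all: it is the same symmetric polynomial in \(y_r\) and the other \(n-1\) variables for every tree on \(n\) vertices. The change of variables \(y=Nx\) is linear, and ordering the vertices from the leaves toward the root makes \(N\) lower-triangular with unit diagonal, so \(\det N=1\).

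Finally I would invoke the transformation law of the symmetric hyperdeterminant under the \(GL_n\)-action \(\mathcal{A}\mapsto\mathcal{A}\cdot M^{\otimes k}\) (equivalently \(F\mapsto F\circ M\) on forms): the resultant of the gradient system is a relative invariant, so \(\det(\mathcal{A}\cdot M^{\otimes k})=(\det M)^{\,c}\det(\mathcal{A})\) for an exponent \(c=c(n,k)\). Unwinding the composition and the factor \(M^{\mathsf T}\) appearing in \(\nabla_y(F\circ M)=M^{\mathsf T}\,(\nabla F)(My)\) through the standard multiplicativity of resultants gives \(c=k(k-1)^{n-1}\), but for the argument only its independence of \(T\) matters. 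Since \(F_{D_k(T)}=G\circ N\) with \(\det N=1\), this yields \(\det(D_k(T))=\det(\mathcal{G})\), where \(\mathcal{G}\) is the symmetric tensor of the tree-independent form \(G\); the right-hand side depends only on \(n\) and \(k\), proving the conjecture. (As a consistency check, the \(n=2\) specialization gives \(G(y_0,y_1)=y_0^{\,k}-y_1^{\,k}-(y_0-y_1)^k\), recovering the value computed in Theorem~\ref{thm:mainthm}.)

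I would expect the main obstacle to be the hyperdeterminant transformation law itself: one must confirm that Qi's normalization of \(\det\) (which may differ from the bare resultant of the gradient by a universal constant depending only on \(n\) and \(k\)) is genuinely a relative \(GL_n\)-invariant of a fixed weight, so that the ratio \(\det(\mathcal{A}\cdot M^{\otimes k})/\det(\mathcal{A})\) is exactly \((\det M)^{c}\); citing the resultant-based invariance from the GKZ/Qi/Shao framework should suffice, and since \(\det N=1\) the precise value of \(c\) is immaterial. A secondary point to verify carefully is that the edge-cut identity and its collapse to \(k\)-th powers hold for all \(n^k\) ordered tuples, including those with repeated indices, and that the subtree-sum map is a genuine invertible change of all \(n\) coordinates rather than merely of the \(n-1\) edge-sums.
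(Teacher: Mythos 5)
You should know at the outset that the paper does not prove this statement: it is posed as an open conjecture, supported only by the trivial cases ($n\le 3$ or $k$ odd) and by machine computations for $(k,n)=(4,4),(4,5),(6,4)$. So there is no proof of the paper's to compare against; judging your argument on its own merits, I believe it is sound, and it would settle the conjecture by generalizing the classical cut-decomposition proof of Graham--Pollak to the multilinear setting. The edge-cut identity and its collapse to $F_{D_k(T)}(x)=\sum_{e}\bigl(s^k-p_e^k-(s-p_e)^k\bigr)$ are correct, including for tuples with repeated vertices; the subtree-sum map is unipotent triangular in any leaf-to-root ordering, hence unimodular; and the resulting form $G(y)=\sum_{v\ne r}\bigl(y_r^k-y_v^k-(y_r-y_v)^k\bigr)$ manifestly forgets the tree. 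The crux you flag --- relative invariance of Qi's determinant under the congruence action --- does hold: Qi's $\det$ is, up to a constant depending only on $(n,k)$, the Macaulay resultant of the gradient system, and the two classical facts $\mathrm{Res}(f_1\circ M,\ldots,f_n\circ M)=(\det M)^{(k-1)^n}\,\mathrm{Res}(f_1,\ldots,f_n)$ and $\mathrm{Res}(A\mathbf{f})=(\det A)^{(k-1)^{n-1}}\,\mathrm{Res}(\mathbf{f})$ combine through the chain rule to give exactly your exponent $(k-1)^n+(k-1)^{n-1}=k(k-1)^{n-1}$, which in any case is irrelevant since $\det N=1$. Two sanity checks beyond yours confirm the scheme: for $k=2$, the tensor $\mathcal{G}$ is the arrow matrix with diagonal $(0,-2,\ldots,-2)$ and arms $1$, whose determinant $(1-n)(-2)^{n-2}$ is precisely the Graham--Pollak value; and for $(n,k)=(2,3)$, the paper's Sylvester formula gives $\det(D_3(K_2))=-3$ while a direct computation gives $\det(\mathcal{G})=-3$, verifying the congruence invariance in a nontrivial instance. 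What your route buys over the paper: an actual proof rather than evidence, plus an explicit tree-independent tensor $\mathcal{G}$ whose hyperdeterminant is the common value --- a normal form that also recovers the paper's $n=2$ theorem and gives a concrete handle on the paper's other conjecture about the sign $(-1)^{n-1}$. To turn the proposal into a complete proof you need only pin down Qi's normalization of the resultant (any discrepancy with the bare Macaulay resultant depends only on $(n,k)$ and cancels) and cite the two resultant transformation laws, e.g.\ from GKZ Chapter~13, Jouanolou, or Cox--Little--O'Shea.
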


The above is trivially true for $n \leq 3$ or $k$ odd, and \cite{CoTa24b} checked it computationally for $(k,n) = (4,4)$, $(4,5)$, and $(6,4)$.  We also venture the following conjecture, supported by all available evidence (as well as the Graham-Pollak results).

\begin{conjecture}
    Whenever it is nonzero, the sign of the quantity $\det(D_k(T))$ for trees $T$ on $n$ vertices is $(-1)^{n-1}$.
\end{conjecture}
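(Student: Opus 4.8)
The plan is to reduce the sign of the hyperdeterminant to a parity count of real eigenvalues, settle $n=2$ completely from the spectrum computed above, and then attack general $n$ by combining the universality conjecture with an analysis of that parity. I would begin with the standard fact (see \cite{CoDu15}) that $\det(D_k(T))$ equals the product of all $N=n(k-1)^{n-1}$ eigenvalues $\lambda_1,\dots,\lambda_N$ of $D_k(T)$, counted with multiplicity. Since $D_k(T)$ is real, its characteristic polynomial has real coefficients, so its non-real eigenvalues occur in conjugate pairs $\lambda,\bar\lambda$, each contributing a positive factor $\lambda\bar\lambda=|\lambda|^2$. Consequently, whenever $\det(D_k(T))\neq0$,
\[
\operatorname{sign}\big(\det(D_k(T))\big)=(-1)^{q},
\]
where $q$ is the number of negative real eigenvalues counted with multiplicity. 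The conjecture is thus equivalent to the assertion that $q\equiv n-1\pmod 2$.

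Next I would dispatch $n=2$, where all eigenvalues are exhibited above: $-1$ with multiplicity $k-1$, together with $\mu_j=(1+\omega^j)^{k-1}-1$ for $0\le j\le k-2$, writing $\omega=e^{2\pi i/(k-1)}$. Here $\mu_0=2^{k-1}-1>0$; the conjugate pairs $\mu_j,\mu_{k-1-j}$ are non-real; the self-paired term $\mu_{(k-1)/2}=-1$ occurs exactly when $k$ is odd; and the $k-1$ eigenvalues equal to $-1$ are all negative. Hence $q=(k-1)+[\,k\text{ odd}\,]$, which is odd for every $k$, so $\operatorname{sign}(\det(D))=(-1)^{q}=-1=(-1)^{n-1}$ in each nonvanishing case. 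This also agrees with the Graham--Pollak value $(1-n)(-2)^{n-2}$ at $k=2$, whose sign is $(-1)^{n-1}$.

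For general $n$ the natural route is to reduce to one convenient tree and then count negative real eigenvalues. Granting the universality conjecture stated above --- or, more modestly, proving that $\operatorname{sign}\det(D_k(T))$ is tree-independent, for instance via a leaf-stripping recursion relating $\det(D_k(T))$ to $\det(D_k(T-v))$ for a leaf $v$ --- it would suffice to analyze a single tree per $n$, such as the star $K_{1,n-1}$, whose Steiner hypermatrix has $(v_1,\dots,v_k)$-entry equal to the number of distinct leaves among $v_1,\dots,v_k$. Perron--Frobenius theory for nonnegative hypermatrices guarantees a positive real eigenvalue (the spectral radius), so one always has genuine real spectrum to work with; the target is to pin down the parity of $q$ for this family, ideally by producing its real eigenvalues explicitly in the spirit of Theorem~\ref{thm:charpolyJ}.

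The hard part will be controlling the real spectrum of $D_k(T)$ for general --- or even specific --- trees. Unlike the symmetric-matrix case there is no orthogonal diagonalization, and eigenvalues of symmetric hypermatrices need not be real, so determining the exact count $q$ (rather than merely its positivity) is delicate. The decisive step is the leaf-stripping recursion: if one can show $\det(D_k(T))$ equals $\det(D_k(T-v))$ times a factor of sign $-1$, then induction from the $n=2$ base case above (together with the degeneracies of Theorem~\ref{thm:mainthm}) yields $(-1)^{n-1}$ at once. Establishing such a recursion and nailing down the sign of its multiplicative factor --- presumably leveraging the even-$k$ nonvanishing of \cite{CoTa24b} --- is where the real work lies.
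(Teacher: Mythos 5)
The statement you are addressing is one of the paper's open conjectures: the paper offers no proof of it, so your argument must stand entirely on its own, and it does not. What you have is a correct reformulation plus a settled $n=2$ case, followed by a conditional reduction for $n\geq 3$ that leaves the conjecture exactly as open as before. The reformulation is fine: since the hyperdeterminant equals the product of all $n(k-1)^{n-1}$ eigenvalues and non-real eigenvalues pair off into conjugates with positive product, the sign is $(-1)^q$ with $q$ the number of negative real eigenvalues, provided the determinant is nonzero. But your step for general $n$ explicitly assumes either the paper's \emph{other} (equally open) universality conjecture or an unproven ``leaf-stripping'' recursion relating $\det(D_k(T))$ to $\det(D_k(T-v))$; nothing in your proposal supplies such a recursion, and hyperdeterminants lack the cofactor and multiplicativity structure that makes such recursions routine for ordinary determinants --- this is precisely why \cite{CoTa24a} and \cite{CoTa24b} needed substantial machinery even to decide vanishing. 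Moreover, even granting tree-independence of the value, you would still need to determine the sign for one tree for each pair $(n,k)$ with $k$ even and $n\geq 3$, and you offer no technique for that: no analogue of Theorem \ref{thm:charpolyJ} is available for the star's Steiner hypermatrix, and Perron--Frobenius yields one positive eigenvalue, not control over the parity of the number of negative ones. That parity count is ``where the real work lies,'' as you say --- which is to concede the conjecture remains unproved.

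There is also a factual error in your $n=2$ analysis, though it happens not to break the conclusion. The eigenvalues $\mu_j=(1+\omega^j)^{k-1}-1$ are in fact \emph{all real}: writing $1+\omega^j = 2\cos\bigl(\pi j/(k-1)\bigr)e^{i\pi j/(k-1)}$ gives $(1+\omega^j)^{k-1} = (-1)^j\bigl(2\cos(\pi j/(k-1))\bigr)^{k-1}$. For example, at $k=4$ the spectrum is $7,-2,-2$ together with $-1$ three times, so your exact count $q=(k-1)+[\,k\text{ odd}\,]$ is wrong ($q=5$ there, not $3$). Your parity conclusion survives only because each pair $\{\mu_j,\mu_{k-1-j}\}$, whether non-real or real (in which case the two are equal), contributes an even amount to $q$; the argument should be run through that observation rather than through the false claim that those pairs are non-real.
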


Next, since \cite{FrGaHa13} showed that the Perron-Frobenius Theorem generalizes to hypermatrices like $D_k(G)$, and distance spectral radii have been studied extensively, we ask,

\begin{question}
    Provide bounds for the spectral radii of Steiner distance hypermatrices for trees and general graphs, in terms of their degree sequence and other statistics.
\end{question}

Finally, we wonder whether the extremal spectral radius is achieved by the path, as \cite{RuPo90} showed holds for ordinary distance matrices:

\begin{question}
    Is the largest spectral radius of the order-$k$ Steiner distance hypermatrix among all $n$-vertex graphs achieved by $P_n$?
\end{question}

\bibliographystyle{plain}
\bibliography{refs}

\end{document}